\newtheorem{theorem}{Theorem}[section]
\newtheorem{corollary}[theorem]{Corollary}
\newtheorem{proposition}[theorem]{Proposition}
\theoremstyle{definition}
\newtheorem{example}[theorem]{Example}
\theoremstyle{parrafo}
\begin{document}

\title[]{Comparison of differences between arithmetic and geometric means}

\author{J. M. Aldaz}
\address{Departamento de Matem\'aticas,
Universidad  Aut\'onoma de Madrid, Cantoblanco 28049, Madrid, Spain.}
\email{jesus.munarriz@uam.es}

\thanks{2000 {\em Mathematical Subject Classification.} 26D15}

\thanks{Partially supported by Grant  MTM2009-12740-C03-03 of the
D.G.I. of Spain}


\keywords{Self-improvement, Arithmetic-Geometric inequality}




\begin{abstract} We complement a recent result of S. Furuichi, by showing
that the differences $\sum_{i=1}^n \alpha_i x_i - \prod_{i=1}^n x_i^{\alpha_i}$ associated
to distinct  sequences of weights are comparable, with constants that
depend on the smallest and largest quotients of the weights. 
\end{abstract}


\maketitle


\markboth{J. M. Aldaz}{AM-GM}

\section{Introduction} It is well known that  the inequality between
arithmetic and geometric means is self-improving,  that is, increasingly better versions of the AM-GM inequality
can be obtained simply by repeated applications of itself.
The simplest case of the inequality, $\sqrt{ x y} \le (x + y)/2$,  is trivial and equivalent to
$(\sqrt{ x } - \sqrt{y})^2 \ge 0$. By repeated self-improvement 
 (cf. \cite{St} for more details)
 it is possible to
obtain from $(\sqrt{ x} - \sqrt{ y})^2 \ge 0$  the  
general
AM-GM inequality
\begin{equation}\label{AMG}
\prod_{i=1}^n x_i^{\alpha_i}  
\le 
\sum_{i=1}^n \alpha_i x_i.
\end{equation}
Of course,  there
are more efficient ways to prove (\ref{AMG}). 
Nevertheless, the idea of self-improvement is a useful one:
It has recently been utilized 
 to find refinements of the
AM-GM inequality, and to give better proofs of existing refinements.
For instance,  self-improvement via the change of variables $x_i = y_i^s$,
 was used in \cite{A1}, with $s=1/2$, to show that 
 \begin{equation}\label{refAMGM} 
\sum_{i=1}^n \alpha_i x_i - \prod_{i=1}^n x_i^{\alpha_i}
\ge
\sum_{i=1}^n \alpha_i \left(x_i^{1/2}- \sum_{k=1}^n \alpha_k x_k^{1/2}\right)^2.
\end{equation}
 Observe that the right hand side of (\ref{refAMGM}) is the variance 
$\operatorname{Var}(x^{1/2})$ of
the vector $x^{1/2} = (x_1^{1/2},\dots,x_n^{1/2})$ with respect to the discrete probability $\sum_{i=1}^n \alpha_i \delta_{x_i}$. While a large variance
(of $x^{1/2}$) pushes the arithmetic and geometric means apart, no conclusions
can be derived from a small
variance, as noted in \cite{A1}. Suppose $n >> 1$ and $x_1 = \cdots =x_n$, so $\operatorname{Var}(x^{1/2})= 0$. If $\alpha_j$ is the smallest weight, letting $x_j\downarrow 0$ leaves $\operatorname{Var}(x^{1/2})$
and $\sum_{i=1}^n \alpha_i x_i$  
essentially unchanged,  while $\prod_{i=1}^n x_i^{\alpha_i}$ drops to 0. Thus, an upper
bound for $\sum_{i=1}^n \alpha_i x_i - \prod_{i=1}^n x_i^{\alpha_i}$ in terms of 
$\operatorname{Var}(x^{1/2})$ alone cannot be found, so it is natural to seek an alternative
way to control this difference.

Using an idea originally due to S. Dragomir (cf. \cite{Dra}), 
S. Furuichi  showed by self-improvement that 
$\sum_{i=1}^n \alpha_i x_i - \prod_{i=1}^n x_i^{\alpha_i} \ge 
n \alpha_{\min} \left( n^{-1}\sum_{i=1}^n  x_i - \prod_{i=1}^n x_i^{1/n}\right)$, where $\alpha_{\min}:= \min\{\alpha_1, \dots, \alpha_n\}$, cf. \cite{Fu}. This result generalizes one side of the 
two-sided refinement of
Young's inequality presented in \cite[Lemma 2.1]{A4}. 
We
show in this note that
 the equal weights AM-GM difference can be utilized  to give an entirely analogous
upper bound, with $\alpha_{\max}:= \max\{\alpha_1, \dots, \alpha_n\}$ replacing $\alpha_{\min}$.
More generally, we shall see that differences between arithmetic and geometric means associated
to different sequences of weights are comparable, with constants depending on the maxima and minima of the
sequence of quotients of weights. A standard application to H\"older's
inequality is  presented next. We finish with a discussion regarding
the ``typical size", in a certain probabilistic sense, of 
$\sum_{i=1}^n \alpha_i x_i - \prod_{i=1}^n x_i^{\alpha_i}$ and of 
$\prod_{i=1}^n x_i^{\alpha_i}/\sum_{i=1}^n \alpha_i x_i$ 
when $n >>1$.

\section{Self-bounds on AM-GM differences}

The first version of this note was written without the author being
aware of \cite{Dra}.
 Originally, Theorem \ref{AMGMrFu} had  weaker bounds, given in terms of
$\alpha_{\min}/\beta_{\max}$ and $\alpha_{\max}/\beta_{\min}$,
where 
$\alpha_{\min}:= \min\{\alpha_1, \dots, \alpha_n\}$, $\alpha_{\max}:= \max\{\alpha_1, \dots, \alpha_n\}$, and analogously for 
$\beta_{\min}$ and $\beta_{\max}$.
It was pointed out by an anonymous referee that the stronger 
bounds appearing in  (\ref{refAMGMFugen}) could be obtained by applying 
\cite[Theorem 1]{Dra} (which refines Jensen's inequality) to the
function $f(t) = e^t$. This also has been observed by 
Flavia Corina Mitroi (personal communication, cf.
\cite{Fla}). 

Here we note that the same
self-improvement argument used by S. Dragomir in \cite{Dra}, yields (\ref{refAMGMFugen}) directly from the AM-GM
inequality. We also study the equality conditions,  which are not
considered in \cite{Dra}.

\begin{theorem}\label{AMGMrFu}  For $n\ge 2$ and $i=1,\dots, n$, let $x_i\ge 0$, 
 and let
$\alpha_i, \beta_i > 0$ satisfy $\sum_{i=1}^n \alpha_i = \sum_{i=1}^n \beta_i = 1$.  Then we have 
\begin{equation}\label{refAMGMFugen} 
\min_{k=1, \dots, n}\left\{\frac{\alpha_{k}}{\beta_{k}}\right\}\left(\sum_{i=1}^n \beta_i x_i - \prod_{i=1}^n x_i^{\beta_i}
\right) 
\le
\sum_{i=1}^n \alpha_i x_i - \prod_{i=1}^n x_i^{\alpha_i}
\le
\max_{k=1, \dots, n}\left\{\frac{\alpha_{k}}{\beta_{k}}\right\}\left(\sum_{i=1}^n \beta_i x_i - \prod_{i=1}^n x_i^{\beta_i}\right).
\end{equation}
Regarding the equality conditions,  
set $A:=\{i: \alpha_i/\beta_i = \min\{\alpha_k/\beta_k: 1\le k\le n\}, i=1, \dots, n\}$ and   
$B=\{i: \alpha_i/\beta_i = \max\{\alpha_k/\beta_k: 1\le k\le n\},
i=1, \dots, n\}$. Then we have equality on the left hand side of (\ref{refAMGMFugen}) if and only if 
for every $j\in \{1, \cdots, n\} \setminus A$, 
\begin{equation}\label{eqleft} 
x_j =  \left(\prod_{i\in A} x_i^{\alpha_i}\right)^{\frac{1}{\sum_{i\in A} \alpha_i}}
= \prod_{i=1}^n x_i^{\alpha_i}, \mbox{ \ or equivalently, \ }
x_j =  \left(\prod_{i\in A} x_i^{\beta_i}\right)^{\frac{1}{\sum_{i\in A} \beta_i}}= \prod_{i=1}^n x_i^{\beta_i},
\end{equation}
while equality holds on the right  hand side of (\ref{refAMGMFugen}) if and only if 
for every $j\in \{1, \cdots, n\} \setminus B$, 
\begin{equation}\label{eqright} 
x_j =  \left(\prod_{i\in B} x_i^{\alpha_i}\right)^{\frac{1}{\sum_{i\in B} \alpha_i}}
= \prod_{i=1}^n x_i^{\alpha_i}, \mbox{ \  or equivalently, \ }
x_j =  \left(\prod_{i\in B} x_i^{\beta_i}\right)^{\frac{1}{\sum_{i\in B} \beta_i}}
= \prod_{i=1}^n x_i^{\beta_i}.
\end{equation}
\end{theorem}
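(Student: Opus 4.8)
The plan is to derive both inequalities in (\ref{refAMGMFugen}) from a single self-improvement of the AM-GM inequality, following Dragomir's idea. The key observation is that if $c := \min_k \{\alpha_k/\beta_k\}$, then $\gamma_i := (\alpha_i - c\beta_i)/(1-c)$ are nonnegative weights summing to $1$ (provided $c<1$; the case $c=1$ forces $\alpha_i=\beta_i$ for all $i$ and is trivial), and we have the convex-combination identity $\alpha_i = c\beta_i + (1-c)\gamma_i$. Applying the general AM-GM inequality (\ref{AMG}) to the weights $\gamma_i$ gives $\prod_i x_i^{\gamma_i} \le \sum_i \gamma_i x_i$. First I would substitute this into the splitting $\sum_i \alpha_i x_i = c\sum_i \beta_i x_i + (1-c)\sum_i \gamma_i x_i$ and $\prod_i x_i^{\alpha_i} = \left(\prod_i x_i^{\beta_i}\right)^c \left(\prod_i x_i^{\gamma_i}\right)^{1-c}$, then use the two-point weighted AM-GM $a^c b^{1-c} \le ca + (1-c)b$ with $a = \prod_i x_i^{\beta_i}$ and $b = \prod_i x_i^{\gamma_i}$ to bound $\prod_i x_i^{\alpha_i}$ from above; subtracting yields $\sum_i\alpha_i x_i - \prod_i x_i^{\alpha_i} \ge c\left(\sum_i\beta_i x_i - \prod_i x_i^{\beta_i}\right) + (1-c)\left(\sum_i\gamma_i x_i - \prod_i x_i^{\gamma_i}\right) \ge c\left(\sum_i\beta_i x_i - \prod_i x_i^{\beta_i}\right)$, which is the left inequality. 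The right inequality follows by the same argument with the roles of $\alpha$ and $\beta$ interchanged: setting $d := \min_k\{\beta_k/\alpha_k\} = 1/\max_k\{\alpha_k/\beta_k\}$ and repeating gives $\sum_i\beta_i x_i - \prod_i x_i^{\beta_i} \ge d\left(\sum_i\alpha_i x_i - \prod_i x_i^{\alpha_i}\right)$, i.e. the desired upper bound.

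For the equality analysis of the left-hand side, I would track when each of the two inequalities used becomes an equality. The chain gives equality iff both $\sum_i\gamma_i x_i - \prod_i x_i^{\gamma_i} = 0$ and the two-point AM-GM $\left(\prod_i x_i^{\beta_i}\right)^c\left(\prod_i x_i^{\gamma_i}\right)^{1-c} = c\prod_i x_i^{\beta_i} + (1-c)\prod_i x_i^{\gamma_i}$ is an equality. The second holds iff $\prod_i x_i^{\beta_i} = \prod_i x_i^{\gamma_i}$. For the first: $\gamma_i>0$ exactly for $i\notin A$, and $\gamma_i=0$ for $i\in A$, so the $\gamma$-AM-GM involves only the variables $\{x_j : j\notin A\}$; equality in AM-GM with strictly positive weights forces all those $x_j$ to be equal to a common value, which must then equal $\prod_{j\notin A} x_j^{\gamma_j} = \prod_i x_i^{\gamma_i}$. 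One then checks that this common value coincides with $\prod_i x_i^{\beta_i}$ (using the second equality condition) and, after a short computation with the weights, with $\left(\prod_{i\in A} x_i^{\beta_i}\right)^{1/\sum_{i\in A}\beta_i}$; the same manipulation performed with $\alpha$ in place of $\beta$ (legitimate since $\alpha_i = c\beta_i$ on $A$) gives the stated $\alpha$-versions of the identities, establishing the equivalence of the two formulations in (\ref{eqleft}). The right-hand equality conditions (\ref{eqright}) are obtained identically, working with $d$, $\max_k\{\alpha_k/\beta_k\}$ and the set $B$ in place of their counterparts.

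I expect the main obstacle to be the equality-condition bookkeeping rather than the inequality itself: one must be careful that the degenerate cases $c=1$ (equivalently $A = \{1,\dots,n\}$) and the appearance of zeros among the $x_i$ are handled correctly, and that the passage between the $\alpha$-normalized and $\beta$-normalized forms of the common value is justified — this rests on the fact that on $A$ the two weight sequences are proportional, $\alpha_i = c\beta_i$, so $\left(\prod_{i\in A}x_i^{\alpha_i}\right)^{1/\sum_{i\in A}\alpha_i} = \left(\prod_{i\in A}x_i^{\beta_i}\right)^{1/\sum_{i\in A}\beta_i}$ automatically. A minor subtlety is that if some $x_i=0$ with $i\notin A$ then the left inequality can only be an equality when all such $x_j$ vanish together with the relevant products, which is consistent with (\ref{eqleft}) read with the value $0$; I would note this briefly. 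Everything else is a routine verification that the two displayed inequalities are in fact the only constraints, and that substituting the claimed equality configuration back into (\ref{refAMGMFugen}) indeed produces equality.
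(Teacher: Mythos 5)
Your proposal is correct and is essentially the paper's own argument: both rest on writing one weight sequence as a convex combination of the other and a nonnegative residual sequence, applying the AM--GM inequality to the residual part, and reading off the equality conditions from the case of equality in AM--GM (with the roles of $\alpha$ and $\beta$ swapped to get the second inequality from the first). The only cosmetic difference is that you factor the step into an $n$-term AM--GM for the weights $\gamma_i=(\alpha_i-c\beta_i)/(1-c)$ followed by a two-point AM--GM, whereas the paper performs a single $(n+1)$-term AM--GM on the variables $x_1,\dots,x_n,\prod_i x_i^{\alpha_i}$; your handling of the degenerate case $c=1$ and of vanishing $x_i$ matches the paper's treatment.
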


Let $|A|$ and $|B|$ denote the cardinalities of the sets $A$ and $B$.
Observe that when  $|A|=1$,  equality holds on the
left hand side of (\ref{refAMGMFugen}) 
 if and only if $x_1 = \cdots =x_n$, 
and likewise for the right hand side when $|B|=1$.

Before proving Theorem \ref{AMGMrFu}, we present an example illustrating the need  for the
slightly complicated formulation of the equality conditions 
(which were incorrectly stated in the first version of this
note, after reading \cite{Fu} without enough care). 
We want to stress the fact that the left and right hand sides of (\ref{refAMGMFugen})
have to be dealt with separately.

\begin{example} In formula (\ref{refAMGMFugen}), let $n=3$, $\alpha_1 = 2/3$, $\alpha_2 = \alpha_3 = 1/6$,
and for $i= 1, 2, 3$, 
$\beta_i = 1/3$. As a normalization, suppose also that $(xyz)^{1/3} =1$. Then $|B|=1$, and equality
holds on the right hand side of (\ref{refAMGMFugen}) if and only if $x=y=z=1$. However, this
condition is too strong for the left hand side since  $1 < |A|=2$; there, equality holds if and only if $x= yz=1$.
\end{example}

\begin{proof} The second inequality in  (\ref{refAMGMFugen}) is equivalent
to
\begin{equation}\label{step1} 
\prod_{i=1}^n x_i^{\beta_i}
\le
\sum_{i=1}^n \left(\beta_{i} - \min_{k=1, \dots, n}\left\{\frac{\beta_{k}}{\alpha_{k}}\right\} \alpha_i
\right) x_i  +  
\min_{k=1, \dots, n}\left\{\frac{\beta_{k}}{\alpha_{k}}\right\} \prod_{i=1}^n x_i^{\alpha_i}.
\end{equation}
Writing 
\begin{equation}\label{step2} 
\prod_{i=1}^n x_i^{\beta_i}
= 
\prod_{i=1}^n x_i^{\beta_{i} - \min_{k=1, \dots, n}\left\{\frac{\beta_{k}}{\alpha_{k}}\right\} \alpha_i}
 \prod_{i=1}^n x_i^{\min_{k=1, \dots, n}\left\{\frac{\beta_{k}}{\alpha_{k}}\right\}\alpha_i},
\end{equation}
we see that (\ref{step1}) is just the  AM-GM inequality,  
 since $0 = \beta_{i} - 
\frac{\beta_{i}}{\alpha_{i}} \alpha_{i} \le
\beta_{i} - \min_{k=1, \dots, n} 
\left\{\frac{\beta_{k}}{\alpha_{k}}\right\} \alpha_{i}$
and
$$
\sum_{i=1}^n \left(\beta_{i} - \min_{k=1, \dots, n} 
\left\{\frac{\beta_{k}}{\alpha_{k}}\right\} \alpha_i
\right)   +  
\min_{k=1, \dots, n} 
\left\{\frac{\beta_{k}}{\alpha_{k}}\right\} =1.$$
 To obtain the first inequality 
in  formula (\ref{refAMGMFugen}), 
multiply both sides of the second inequality by $\min_{k=1, \dots, n}\left\{\frac{\beta_{k}}{\alpha_{k}}\right\}$,
and note that this is just the first inequality with the roles of
the $\alpha$'s and the $\beta$'s interchanged. Alternatively, it is
possible to prove the first inequality directly, using the
same argument as above, and derive the second from the first.

For the case of equality, set $q_i := \alpha_i/\beta_i$ and
 suppose that $q_1\le \cdots \le q_n$ (by
rearranging the sequences of weights, if needed). If $r:= |A| =n$, 
then also $s:=|B| =n$, all $q_i =1$, and thus, for $i=1,\dots, n$, $\alpha_i =\beta_i$, 
whence equality trivially holds on both sides of (\ref{refAMGMFugen}),
without any conditions imposed on  the variables $x_i$. 
If $r < n$, then $r < n - s +1\le n$, and 
$q_{n-s} < q_{n-s + 1} =\cdots = q_{n}$. Assume that equality holds
on the right hand side of (\ref{refAMGMFugen}), or, equivalently, that
\begin{equation}\label{step1eq} 
\prod_{i=1}^n x_i^{\beta_i}
=
\sum_{i=1}^n \left(\beta_{i} - \min_{k=1, \dots, n}\left\{\frac{\beta_{k}}{\alpha_{k}}\right\} \alpha_i
\right) x_i  +  
\min_{k=1, \dots, n}\left\{\frac{\beta_{k}}{\alpha_{k}}\right\} \prod_{i=1}^n x_i^{\alpha_i}.
\end{equation}
Removing the zero terms from the preceding sum, we see that (\ref{step1eq}) holds if and only if  
\begin{equation}\label{step2eq} 
\sum_{i=1}^{n - s} \left(\beta_{i} - \min_{k=1, \dots, n}\left\{\frac{\beta_{k}}{\alpha_{k}}\right\} \alpha_i
\right) x_i  +  
\min_{k=1, \dots, n}\left\{\frac{\beta_{k}}{\alpha_{k}}\right\} \prod_{i=1}^n x_i^{\alpha_i}
=
\prod_{i=1}^n x_i^{\beta_i}
\end{equation}
It now follows from the equality condition in the  AMGM inequality  that (\ref{step2eq}) holds if and only if
\begin{equation}\label{step4eq}
c:= x_1 =\cdots = x_{n-s} = 
 \prod_{i=1}^n x_i^{\alpha_i}
 =
\prod_{i=1}^n x_i^{\beta_i}.
\end{equation}

Thus,
\begin{equation}\label{step5eq}
c= c^{\sum_{i=1}^{n - s}  \alpha_i}
 \prod_{i= n -s + 1}^n x_i^{\alpha_i}
 =
c^{\sum_{i=1}^{n - s}  \beta_i} \prod_{i= n -s + 1}^n x_i^{\beta_i},
\end{equation}
and we obtain the conditions appearing in (\ref{eqleft}) by
simplifying and solving for $c$. 
To see that one of these conditions is redundant, recall that for $i = n -s + 1,\dots, n$,
$\beta_i = \min_{k=1, \dots, n}\left\{\frac{\beta_{k}}{\alpha_{k}}\right\}\alpha_i$,
so 
$$\left(\prod_{i=n -s + 1}^n x_i^{\alpha_i}\right)^{\frac{1}{\sum_{i=n -s + 1}^n \alpha_i}}
= 
\left(\prod_{i=n -s + 1}^n x_i^{\beta_i}\right)^{\frac{1}{\sum_{i=n -s + 1}^n \beta_i}}.
$$
The equality conditions for the left hand side of (\ref{refAMGMFugen}) can be obtained in the same way,
or, alternatively,
from those for the right hand side, by interchanging the roles of the $\alpha$'s and
$\beta$'s.
\end{proof}

Recall that $\alpha_{\min}:= \min\{\alpha_1, \dots, \alpha_n\}$ and $\alpha_{\max}:= \max\{\alpha_1, \dots, \alpha_n\}$.

\begin{corollary}\label{cor1}  Under the same hypotheses and with the notation of
Theorem  \ref{AMGMrFu}, let $\beta_i = 1/n$
 for all $i = 1, \dots, n$. Then
\begin{equation}\label{refAMGMFeqwe} 
n \alpha_{\min}\left(\frac1n \sum_{i=1}^n  x_i - \prod_{i=1}^n x_i^{1/n}\right) 
\le
\sum_{i=1}^n \alpha_i x_i - \prod_{i=1}^n x_i^{\alpha_i}
\le
n \alpha_{\max}\left(\frac1n \sum_{i=1}^n  x_i - \prod_{i=1}^n x_i^{1/n}\right).
\end{equation}
\end{corollary}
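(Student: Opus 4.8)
The plan is to derive Corollary~\ref{cor1} as an immediate specialization of Theorem~\ref{AMGMrFu}, taking the second sequence of weights to be uniform, $\beta_i = 1/n$ for $i = 1, \dots, n$. First I would substitute this choice into~(\ref{refAMGMFugen}). The quotients of weights become $\alpha_k/\beta_k = n\alpha_k$, so that, since $n$ is a fixed positive constant and multiplication by $n>0$ preserves order, $\min_{k=1,\dots,n}\{\alpha_k/\beta_k\} = n\min_{k}\alpha_k = n\alpha_{\min}$ and $\max_{k=1,\dots,n}\{\alpha_k/\beta_k\} = n\max_{k}\alpha_k = n\alpha_{\max}$; in particular the minimizing and maximizing indices are unchanged.

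Next I would rewrite the $\beta$-weighted means appearing on both sides of~(\ref{refAMGMFugen}): with $\beta_i = 1/n$ one has $\sum_{i=1}^n \beta_i x_i = \frac{1}{n}\sum_{i=1}^n x_i$ and $\prod_{i=1}^n x_i^{\beta_i} = \prod_{i=1}^n x_i^{1/n}$. Inserting these expressions together with the computed values of the extremal quotients into~(\ref{refAMGMFugen}) produces exactly~(\ref{refAMGMFeqwe}), which is the assertion of the corollary.

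Because the argument is a direct substitution into an already-established inequality, there is no genuine obstacle to overcome; the only point that deserves a moment's care is the simplification of the extremal quotients, which is trivial in this setting. As an optional addendum one could record that the equality conditions of Theorem~\ref{AMGMrFu} specialize as well: the set $A$ becomes $\{i : \alpha_i = \alpha_{\min}\}$ and $B$ becomes $\{i : \alpha_i = \alpha_{\max}\}$, so that equality on the left (respectively right) of~(\ref{refAMGMFeqwe}) holds precisely under the corresponding instances of~(\ref{eqleft}) (respectively~(\ref{eqright})).
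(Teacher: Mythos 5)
Your proposal is correct and matches the paper's intent exactly: the paper states Corollary~\ref{cor1} without proof precisely because it is the immediate specialization $\beta_i = 1/n$ of Theorem~\ref{AMGMrFu}, under which $\alpha_k/\beta_k = n\alpha_k$ gives the extremal constants $n\alpha_{\min}$ and $n\alpha_{\max}$ and the $\beta$-weighted means reduce to the equal-weight ones. Your optional remark on how the sets $A$ and $B$ specialize is a harmless and accurate addendum.
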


The left hand side of (\ref{refAMGMFeqwe}) is 
essentially the content of \cite{Fu}. For completeness, next
we consider the equality case in the Dragomir-Jensen inequality for strictly
convex functions.

\begin{proposition}\label{DraJen} For $n\ge 2$ and $i=1,\dots, n$, 
 let
$\alpha_i, \beta_i > 0$ satisfy $\sum_{i=1}^n \alpha_i = \sum_{i=1}^n \beta_i = 1$.  Let
$f:C\to\mathbb{R}$ be strictly convex, where $C$ is a convex subset of a linear space.
As in Theorem \ref{AMGMrFu}, we  set  $A:=\{i: \alpha_i/\beta_i = \min\{\alpha_k/\beta_k: 1\le k\le n\}, i=1, \dots, n\}$ and   
$B=\{i: \alpha_i/\beta_i = \max\{\alpha_k/\beta_k: 1\le k\le n\},
i=1, \dots, n\}$. 
Then, we have  equality on the left hand side of
the Dragomir-Jensen inequalities 
\begin{equation}\label{refJen} 
\min_{k=1, \dots, n}\left\{\frac{\alpha_{k}}{\beta_{k}}\right\}\left(\sum_{i=1}^n \beta_i f(x_i) 
- f\left(\sum_{i=1}^n \beta_i x_i\right)
\right) 
\le
\end{equation}
\begin{equation}\label{refJen2}
\sum_{i=1}^n \alpha_i f(x_i) - f\left(\sum_{i=1}^n \alpha_i x_i\right)
\le
\max_{k=1, \dots, n}\left\{\frac{\alpha_{k}}{\beta_{k}}\right\}\left(\sum_{i=1}^n \beta_i f(x_i) 
- f\left(\sum_{i=1}^n \beta_i x_i\right)
\right)
\end{equation}
 if and only if 
for every $j\in \{1, \cdots, n\} \setminus A$, 
\begin{equation}\label{eqleftJen} 
x_j 
= 
\sum_{i\in A} \frac{\alpha_i}{\sum_{i\in A} \alpha_i} x_i
= 
 \sum_{i=1}^n \alpha_i x_i, \mbox{ \ or equivalently, \ }
x_j 
= 
\sum_{i\in A} \frac{\beta_i}{\sum_{i\in A} \beta_i} x_i
=  
 \sum_{i=1}^n \beta_i x_i,
\end{equation}
while equality holds on the right  hand side of (\ref{refJen})-(\ref{refJen2}) if and only if 
for every $j\in \{1, \cdots, n\} \setminus B$, 
\begin{equation}\label{eqrightJen} 
x_j 
= 
\sum_{i\in B} \frac{\alpha_i}{\sum_{i\in B} \alpha_i} x_i
= 
 \sum_{i=1}^n \alpha_i x_i, \mbox{ \ or equivalently, \ }
x_j 
= 
\sum_{i\in B} \frac{\beta_i}{\sum_{i\in B} \beta_i} x_i
=  
 \sum_{i=1}^n \beta_i x_i.
\end{equation}
\end{proposition}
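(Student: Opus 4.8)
The plan is to mimic exactly the proof of Theorem \ref{AMGMrFu}, replacing the AM-GM inequality by Jensen's inequality for the strictly convex $f$. First I would establish the inequalities \eqref{refJen}--\eqref{refJen2} themselves (they are the content of \cite[Theorem 1]{Dra}, but for completeness one records the self-improvement argument): the right-hand inequality is equivalent, after dividing by $\min_k\{\beta_k/\alpha_k\}$ and rearranging, to
\begin{equation*}
f\left(\sum_{i=1}^n \beta_i x_i\right)
\le
\sum_{i=1}^n \left(\beta_i - \min_{k}\left\{\tfrac{\beta_k}{\alpha_k}\right\}\alpha_i\right) f(x_i)
+ \min_{k}\left\{\tfrac{\beta_k}{\alpha_k}\right\} f\left(\sum_{i=1}^n \alpha_i x_i\right),
\end{equation*}
and this is Jensen's inequality applied to the convex combination in which the $j$-th point $x_j$ carries weight $\beta_j - \min_k\{\beta_k/\alpha_k\}\alpha_j \ge 0$ and the extra point $\sum_i \alpha_i x_i$ carries weight $\min_k\{\beta_k/\alpha_k\}$; the weights sum to $1$ exactly as in the proof of Theorem \ref{AMGMrFu}. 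The left-hand inequality follows by swapping the roles of the $\alpha$'s and $\beta$'s.

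Next I would treat the equality conditions, which is the real content of the Proposition. Set $q_i := \alpha_i/\beta_i$, rearrange so that $q_1 \le \cdots \le q_n$, put $r := |A|$, $s := |B|$. If $r = n$ then all $q_i = 1$, hence $\alpha_i = \beta_i$ and equality holds on both sides with no restriction on the $x_i$. If $r < n$, then $q_{n-s} < q_{n-s+1} = \cdots = q_n$, so for $i = n-s+1,\dots,n$ the weight $\beta_i - \min_k\{\beta_k/\alpha_k\}\alpha_i$ vanishes, while for $i \le n-s$ it is strictly positive. Assuming equality on the right-hand side of \eqref{refJen}--\eqref{refJen2}, i.e.\ equality in the displayed Jensen inequality above, I would invoke the strict-convexity equality condition for Jensen: equality forces all points carrying positive weight to coincide, that is,
\begin{equation*}
c := x_1 = \cdots = x_{n-s} = \sum_{i=1}^n \alpha_i x_i.
\end{equation*}
Substituting $x_1 = \cdots = x_{n-s} = c$ into $\sum_{i=1}^n \alpha_i x_i = c$ gives $c\sum_{i=1}^{n-s}\alpha_i + \sum_{i=n-s+1}^n \alpha_i x_i = c$, i.e.\ $\sum_{i=n-s+1}^n \alpha_i x_i = c\left(1 - \sum_{i=1}^{n-s}\alpha_i\right) = c\sum_{i=n-s+1}^n\alpha_i$, which upon dividing by $\sum_{i\in B}\alpha_i$ yields $c = \sum_{i\in B}\frac{\alpha_i}{\sum_{i\in B}\alpha_i}x_i$; the same computation with $\beta$'s gives the equivalent form, and since $\beta_i = \min_k\{\beta_k/\alpha_k\}\alpha_i$ for $i \in B$ the two averages of the $x_i$, $i \in B$, coincide, so the second stated condition is redundant given the first. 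This is the additive analogue of \eqref{step5eq}; indeed, \eqref{eqrightJen} is obtained from \eqref{eqright} by the substitution $x_i \mapsto e^{x_i}$, $f = \exp$, taking logarithms. The left-hand equality conditions follow by the $\alpha \leftrightarrow \beta$ symmetry, as in Theorem \ref{AMGMrFu}.

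The main obstacle — more a point requiring care than a genuine difficulty — is the bookkeeping with the index set $B$ versus the tail $\{n-s+1,\dots,n\}$: one must check that the set of indices with vanishing transferred weight $\beta_i - \min_k\{\beta_k/\alpha_k\}\alpha_i$ is \emph{exactly} $B$ (equivalently, exactly the tail after rearrangement), so that the strict-convexity conclusion ``all positively-weighted points are equal'' applies precisely to $x_1,\dots,x_{n-s}$ together with the auxiliary point $\sum_i\alpha_i x_i$, and not to any $x_i$ with $i \in B$. This is where strict convexity is essential — with merely convex $f$ one only gets affinity of $f$ on the relevant segment, not equality of the points. Everything else is the routine algebra already carried out in the proof of Theorem \ref{AMGMrFu}, transported from the multiplicative to the additive setting; I would simply remark that the proof is identical mutatis mutandis and spell out only the Jensen equality step.
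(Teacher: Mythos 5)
Your proposal is correct and follows essentially the same route as the paper: after noting that the case $|A|=n$ is trivial, you reduce equality on the right of \eqref{refJen}--\eqref{refJen2} to equality in a single Jensen inequality whose positively weighted points are $x_1,\dots,x_{n-s}$ and $\sum_i\alpha_i x_i$, invoke strict convexity to force these to coincide, and then solve for the common value $c$ to obtain \eqref{eqrightJen}, exactly as in the paper's argument (including the observation that the $\beta$-form of the condition is redundant and the $\alpha\leftrightarrow\beta$ symmetry for the left-hand side). No gaps.
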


\begin{proof} As in the proof of Theorem \ref{AMGMrFu}, we set $q_i := \alpha_i/\beta_i$ and
 suppose that $q_1\le \cdots \le q_n$. If $r:= |A| =n$, then $\alpha_i =\beta_i$ for all $i=1,\dots, n$,  
and equality trivially holds on both (\ref{refJen}) and (\ref{refJen2}),
without any conditions on  the $x_i$'s (and without needing the strict convexity of $f$). 
If $r < n$, then $r < n - s +1\le n$, and 
$q_{n-s} < q_{n-s + q} =\cdots = q_{n}$. Assume that equality holds
on (\ref{refJen2}), or, equivalently, after removing the zero summands, that
\begin{equation}\label{step1Jen} 
f\left(\sum_{i=1}^n \beta_i x_i\right)
=
\sum_{i=1}^{n-s} \left(\beta_{i} - \min_{k=1, \dots, n}\left\{\frac{\beta_{k}}{\alpha_{k}}\right\} \alpha_i
\right) f(x_i)  +  
\min_{k=1, \dots, n}\left\{\frac{\beta_{k}}{\alpha_{k}}\right\} f\left(\sum_{i=1}^n \alpha_i x_i\right).
\end{equation}
Using the equality condition for strictly convex functions in Jensen's inequality,  we see that (\ref{step1Jen}) holds if and only if
\begin{equation}\label{step2Jen}
c:= x_1 =\cdots = x_{n-s} 
 =
\sum_{i=1}^n \alpha_i x_i
= 
 \sum_{i=1}^n \beta_i x_i.
\end{equation}
Replacing in the preceding sums $x_i$ by $c$  for $1 \le i\le n-s$, we conclude that 
(\ref{step2Jen}) holds if and only if
\begin{equation}\label{step3Jen}
c
 =
\sum_{i=n-s +1}^n \frac{\alpha_i}{\sum_{i=n-s +1 }^n \alpha_i} x_i,
\end{equation}
or equivalently, if and only if
\begin{equation}\label{step4Jen}
c
 =
\sum_{i=n-s +1}^n \frac{\beta_i}{\sum_{i=n-s +1 }^n \beta_i} x_i.
\end{equation}
The proof for the left hand side of (\ref{refJen})-(\ref{refJen2}) is entirely analogous; it
can also be obtained from the right hand side.
\end{proof}

\section{Refinements of H\"older's inequality}

When the AM-GM inequality is specialized to just
 two terms, it is usually called Young's inequality.  
Next we utilize the preceding results  to generalize \cite[Lemma 2.1]{A4}
and \cite[Theorem 2.2]{A4}, giving two-sided refinements of
H\"older's inequality for  two functions.
When considering $L^p = L^p(\mu)$ spaces, 
we always assume that $\mu$ is not identically zero.

\begin{corollary}\label{betteryoungl}  Let $1 < p < \infty$ and let $q = p/(p-1)$ be its conjugate
exponent. Then for all $u,v \ge 0$ and all $\beta\in (0,1)$,
\begin{equation}\label{betteryoung}
\min \left\{\frac{1}{\beta p}, \frac{1}{(1 - \beta) q}\right\}
\left(\beta u^{p} + (1-\beta) v^{q} - u^{\beta p} 
v^{(1-\beta)q} \right) \le
\frac{u^p}{p} + \frac{v^q}{q} - uv   
\end{equation}
\begin{equation}\label{betteryoung2}
\le 
\max \left\{\frac{1}{\beta p}, \frac{1}{(1 - \beta) q}\right\}
\left(\beta u^{p} + (1-\beta) v^{q} - u^{\beta p} 
v^{(1-\beta)q} \right). 
\end{equation}
\end{corollary}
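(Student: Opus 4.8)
The plan is to deduce Corollary~\ref{betteryoungl} from Theorem~\ref{AMGMrFu} by specializing to $n=2$ and choosing the two weight sequences appropriately. Take $\alpha_1 = 1/p$, $\alpha_2 = 1/q$, so that $\alpha_1 + \alpha_2 = 1/p + 1/q = 1$, and take $\beta_1 = \beta$, $\beta_2 = 1 - \beta$; both sequences consist of strictly positive numbers summing to $1$, as required. For the variables, the natural substitution is $x_1 = u^p$ and $x_2 = v^q$ (both nonnegative), since then $\sum_{i=1}^2 \alpha_i x_i = u^p/p + v^q/q$ and $\prod_{i=1}^2 x_i^{\alpha_i} = (u^p)^{1/p}(v^q)^{1/q} = uv$. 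This identifies the middle term of \eqref{refAMGMFugen} with the middle term $u^p/p + v^q/q - uv$ of \eqref{betteryoung}--\eqref{betteryoung2}.

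Next I would compute the outer term in \eqref{refAMGMFugen} under this substitution. We have $\sum_{i=1}^2 \beta_i x_i = \beta u^p + (1-\beta) v^q$, while $\prod_{i=1}^2 x_i^{\beta_i} = (u^p)^{\beta}(v^q)^{1-\beta} = u^{\beta p} v^{(1-\beta) q}$, so the parenthesized factor in \eqref{refAMGMFugen} is exactly the parenthesized factor $\beta u^p + (1-\beta) v^q - u^{\beta p} v^{(1-\beta)q}$ appearing in the Corollary. It remains to identify the constants: $\alpha_1/\beta_1 = 1/(\beta p)$ and $\alpha_2/\beta_2 = 1/((1-\beta) q)$, so $\min_{k}\{\alpha_k/\beta_k\} = \min\{1/(\beta p),\, 1/((1-\beta)q)\}$ and $\max_{k}\{\alpha_k/\beta_k\} = \max\{1/(\beta p),\, 1/((1-\beta)q)\}$. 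Plugging these into \eqref{refAMGMFugen} yields precisely \eqref{betteryoung}--\eqref{betteryoung2}.

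There is essentially no hard part here; the only point requiring a word of care is that Theorem~\ref{AMGMrFu} is stated for $x_i \ge 0$, and the substitution $x_1 = u^p$, $x_2 = v^q$ indeed produces nonnegative values for all $u, v \ge 0$, so the hypotheses are met in full generality (including the boundary cases $u = 0$ or $v = 0$, where both sides degenerate but the inequalities persist). One could also remark that Theorem~\ref{AMGMrFu} yields the corresponding equality conditions for free: using the sets $A$ and $B$ from that theorem (here each a singleton unless $1/(\beta p) = 1/((1-\beta)q)$), equality in \eqref{betteryoung} or \eqref{betteryoung2} forces $u^p = v^q$, i.e.\ $v = u^{p/q} = u^{p-1}$, which recovers the familiar equality condition in Young's inequality. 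I would present the argument in two or three lines, noting the substitution and the identification of the three groups of terms, and leave the equality discussion as an optional remark.
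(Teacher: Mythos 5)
Your proof is correct and is exactly the paper's argument: the paper proves this corollary by the one-line substitution $n=2$, $\alpha_1 = 1/p$, $\alpha_2 = 1/q$, $x_1 = u^p$, $x_2 = v^q$ in Theorem \ref{AMGMrFu}, which is precisely what you carry out (with the weights $\beta_1=\beta$, $\beta_2=1-\beta$ made explicit). Your added remark on the equality case is also consistent with the theorem's equality conditions.
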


\begin{proof} Set $n=2$, $\alpha = 1/p$, $1 - \alpha = 1/q$, $x_1 = u^p$ and $x_2=v^q$ in Theorem \ref{AMGMrFu}.
\end{proof}

\begin{theorem}\label{betterhold}  Let  $1 < p < \infty$, and let $q = p/(p-1)$ be its
conjugate exponent. If $f\in L^p$, $g\in L^q$, $f, g\ge 0$, and $\|f\|_p, \|g\|_q > 0$,  then
for all $\beta\in (0,1)$, 
\begin{equation}\label{bonhold}
\|f\|_p\|g\|_q \left(1 - \max \left\{\frac{1}{\beta p}, \frac{1}{(1 - \beta) q}\right\} \left(1 - 
 \frac{\int f^{\beta p} g^{(1 - \beta) q}}{\left(\int f^{p}\right)^\beta
\left(\int g^{q}\right)^{1 - \beta}}\right) \right)  \le \|fg\|_1 
\end{equation}
\begin{equation}\label{bonhold2}
\le \|f\|_p\|g\|_q \left(1 - \min \left\{\frac{1}{\beta p}, \frac{1}{(1 - \beta) q}\right\} \left(1 - 
 \frac{\int f^{\beta p} g^{(1 - \beta) q}}{\left(\int f^{p}\right)^\beta
\left(\int g^{q}\right)^{1 - \beta}}\right) \right).
\end{equation}
\end{theorem}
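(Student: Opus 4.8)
The plan is to integrate the pointwise two-sided refined Young inequality of Corollary~\ref{betteryoungl} after the standard normalization. To establish (\ref{bonhold}) I would first rewrite the upper bound (\ref{betteryoung2}) so as to isolate the product on the left: for all $u,v\ge 0$,
\begin{equation*}
uv \ge \frac{u^p}{p} + \frac{v^q}{q} - \max\left\{\frac{1}{\beta p},\frac{1}{(1-\beta)q}\right\}\left(\beta u^{p} + (1-\beta) v^{q} - u^{\beta p}v^{(1-\beta)q}\right).
\end{equation*}
Since $\|f\|_p,\|g\|_q>0$, I may substitute $u = f(x)/\|f\|_p$ and $v = g(x)/\|g\|_q$, obtaining a pointwise inequality between nonnegative measurable functions.

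Next I would check that the resulting inequality can be integrated term by term. The functions $f^{p}/\|f\|_p^{p}$ and $g^{q}/\|g\|_q^{q}$ lie in $L^1(\mu)$, each with integral $1$; the cross term $f^{\beta p}g^{(1-\beta)q}$ lies in $L^1(\mu)$ by H\"older's inequality with the conjugate exponents $1/\beta$ and $1/(1-\beta)$, which moreover yields $\int f^{\beta p}g^{(1-\beta)q}\le\left(\int f^{p}\right)^{\beta}\left(\int g^{q}\right)^{1-\beta}$; and $fg\in L^1(\mu)$ by the classical H\"older inequality. Hence every term of the pointwise inequality is $\mu$-integrable, and integrating it over the underlying space is legitimate.

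Carrying out the integration, the left side becomes $\|fg\|_1/(\|f\|_p\|g\|_q)$, while on the right the additive constants collapse: using $\int f^{p}/\|f\|_p^{p} = \int g^{q}/\|g\|_q^{q} = 1$ together with $1/p+1/q=1$ and $\beta+(1-\beta)=1$, one is left with
\begin{equation*}
\frac{\|fg\|_1}{\|f\|_p\|g\|_q} \ge 1 - \max\left\{\frac{1}{\beta p},\frac{1}{(1-\beta)q}\right\}\left(1 - \frac{\int f^{\beta p}g^{(1-\beta)q}}{\|f\|_p^{\beta p}\|g\|_q^{(1-\beta)q}}\right).
\end{equation*}
Rewriting $\|f\|_p^{\beta p} = \left(\int f^{p}\right)^{\beta}$ and $\|g\|_q^{(1-\beta)q} = \left(\int g^{q}\right)^{1-\beta}$ and multiplying through by $\|f\|_p\|g\|_q$ gives (\ref{bonhold}). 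Inequality (\ref{bonhold2}) follows in exactly the same manner, starting from the lower bound (\ref{betteryoung}) instead of (\ref{betteryoung2}); the roles of $\min$ and $\max$ are then interchanged, as reflected in the statement.

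I do not anticipate any serious difficulty. The one place deserving care is the finiteness of the cross-integral $\int f^{\beta p}g^{(1-\beta)q}$ and of $\|fg\|_1$, since it is precisely this integrability that licenses passing from the pointwise inequality to its integrated form, together with the small bit of bookkeeping needed to see that the constant terms cancel.
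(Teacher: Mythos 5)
Your proposal is correct and follows essentially the same route as the paper: substitute the normalized functions $u = f(x)/\|f\|_p$, $v = g(x)/\|g\|_q$ into the two-sided refined Young inequality of Corollary~\ref{betteryoungl}, verify that every term is integrable (your justification of $\int f^{\beta p} g^{(1-\beta)q} < \infty$ via H\"older with exponents $1/\beta$ and $1/(1-\beta)$ is exactly the alternative argument the paper notes parenthetically), and integrate and reorganize. No gaps.
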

\begin{proof}  The standard derivation of H\"older's inequality from
Young's inequality is applicable: Write
$u = f(x)/\|f\|_{p}$ and $v = g(x)/\|g\|_{q}$ in (\ref{betteryoung})-(\ref{betteryoung2}), integrate and reorganize terms. The rearranging of
terms is justified, since it follows from the proof
that $\int f^{\beta p} g^{(1 - \beta) q} \le 
 \left(\int f^{p}\right)^\beta
\left(\int g^{q}\right)^{1 - \beta} < \infty,
$ so all the quantities involved in (\ref{bonhold})
and (\ref{bonhold2}) are finite (alternatively, 
since $f^{\beta p}\in L^{1/\beta}$ and  $g^{(1 - \beta) q} \in L^{1/(1-\beta)}$, the bound $\int f^{\beta p} g^{(1 - \beta) q} < \infty$ can also be deduced  from H\"older's inequality).
\end{proof}

Observe that setting $\beta = 1/p$ in (\ref{bonhold})
and (\ref{bonhold2}), the preceding inequalities
become trivial equalities, as was to be expected from the equality conditions
in Theorem \ref{AMGMrFu}. When $\beta = 1/2$, the formulas in (\ref{bonhold}) and (\ref{bonhold2})
can be expressed using the angular distance $\left\|\frac{f^{p/2}}{\|f\|_p^{p/2}}-\frac{g^{q/2}}{\|g\|_q^{q/2}}\right\|_2$
between the $L^2$ functions $f^{p/2}$ and $g^{q/2}$
(cf. \cite[Theorem 2.2]{A4}) so if the
 the angular distance is ``large", then
  $\|fg\|_1$ is ``small", and 
viceversa. We are not aware of
any simple geometric interpretation of the bounds  in (\ref{bonhold})-(\ref{bonhold2}) when $\beta \neq 1/2$. 

We finish this section by stating the corresponding
refinement of H\"older's inequality for several
functions, in the simplest case $\beta_i = 1/n$
(of course, other values of $\beta_i$ can be
used if it is convenient). The proof is standard and therefore ommited. 

\begin{theorem}\label{betterhold2}  For $i = 1,\dots, n$, let $1 < p_i < \infty$ 
be such that $p_1^{-1} + \cdots + p_n^{-1} = 1$, and let $0\le f_i\in L^{p_i}$
satisfy  $\|f_i\|_{p_i}  > 0$.  Writting 
$p^{-1}_{\min} = \min\{p_1^{-1}, \dots, p_n^{-1}\}$
and $p^{-1}_{\max} = \max\{p_1^{-1}, \dots, p_n^{-1}\}$, we have
\begin{equation}\label{bonhold3} 
\prod_{i=1}^n\|f_i\|_{p_i} \left(1 - n p^{-1}_{\max}\left( 1 - \frac{\int \prod_{i=1}^n f_i^{p_i/n}}{\prod_{i=1}^n \|f_i\|_{p_i}^{p_i/n}} \right)\right) \le
\left\|\prod_{i=1}^n f_i\right\|_1 
\end{equation}
\begin{equation}\label{bonhold4}
\le  
 \prod_{i=1}^n\|f_i\|_{p_i} \left(1 - n p^{-1}_{\min}\left( 1 - \frac{\int \prod_{i=1}^n f_i^{p_i/n}}{\prod_{i=1}^n \|f_i\|_{p_i}^{p_i/n}} \right)\right).
\end{equation}
\end{theorem}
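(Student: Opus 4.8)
The plan is to reduce Theorem \ref{betterhold2} to Corollary \ref{cor1} exactly as Theorem \ref{betterhold} is reduced to Corollary \ref{betteryoungl}, so the proof is a routine integration-and-reorganization argument; the only points requiring care are the normalizations and the finiteness of all integrals involved. First I would apply Corollary \ref{cor1} with the choice of weights $\alpha_i = p_i^{-1}$ (which indeed satisfy $\alpha_i>0$ and $\sum_{i=1}^n \alpha_i = 1$ by hypothesis) and $\beta_i = 1/n$, and with the pointwise substitution
\[
x_i = \frac{f_i(x)^{p_i}}{\|f_i\|_{p_i}^{p_i}}, \qquad i = 1,\dots,n,
\]
so that $\alpha_{\min} = p^{-1}_{\min}$ and $\alpha_{\max} = p^{-1}_{\max}$. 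With this substitution $\sum_{i=1}^n \alpha_i x_i = \sum_{i=1}^n p_i^{-1} f_i(x)^{p_i}/\|f_i\|_{p_i}^{p_i}$, while $\prod_{i=1}^n x_i^{\alpha_i} = \prod_{i=1}^n f_i(x)/\prod_{i=1}^n \|f_i\|_{p_i}$ and $\prod_{i=1}^n x_i^{1/n} = \prod_{i=1}^n f_i(x)^{p_i/n} / \prod_{i=1}^n \|f_i\|_{p_i}^{p_i/n}$.

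Next I would integrate the resulting chain of pointwise inequalities against $\mu$. Integrating $\sum_i p_i^{-1} f_i^{p_i}/\|f_i\|_{p_i}^{p_i}$ gives $\sum_i p_i^{-1} = 1$; integrating $\prod_i f_i/\prod_i\|f_i\|_{p_i}$ gives $\|\prod_i f_i\|_1 / \prod_i\|f_i\|_{p_i}$; and integrating $\prod_i f_i^{p_i/n}/\prod_i\|f_i\|_{p_i}^{p_i/n}$ gives the ratio $\big(\int \prod_i f_i^{p_i/n}\big)/\prod_i\|f_i\|_{p_i}^{p_i/n}$ appearing in \eqref{bonhold3}--\eqref{bonhold4}. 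Thus the integrated form of \eqref{refAMGMFeqwe} reads
\[
n p^{-1}_{\min}\!\left(1 - \frac{\int \prod_{i=1}^n f_i^{p_i/n}}{\prod_{i=1}^n\|f_i\|_{p_i}^{p_i/n}}\right)
\le 1 - \frac{\|\prod_{i=1}^n f_i\|_1}{\prod_{i=1}^n\|f_i\|_{p_i}}
\le n p^{-1}_{\max}\!\left(1 - \frac{\int \prod_{i=1}^n f_i^{p_i/n}}{\prod_{i=1}^n\|f_i\|_{p_i}^{p_i/n}}\right),
\]
and multiplying through by $\prod_i\|f_i\|_{p_i}$ and rearranging yields \eqref{bonhold3}--\eqref{bonhold4} directly.

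The one genuine obstacle, and the only place the reasoning is not purely formal, is justifying that every integral above is finite so that the rearrangement of terms is legitimate; this is the same subtlety handled in the proof of Theorem \ref{betterhold}. Here it follows because $f_i^{p_i/n} \in L^{n}(\mu)$ (as $f_i\in L^{p_i}$), so by the generalized H\"older inequality $\int \prod_{i=1}^n f_i^{p_i/n} \le \prod_{i=1}^n \|f_i^{p_i/n}\|_{n} = \prod_{i=1}^n \|f_i\|_{p_i}^{p_i/n} < \infty$; in particular the central ratio in \eqref{bonhold3}--\eqref{bonhold4} lies in $[0,1]$, $\|\prod_i f_i\|_1 \le \prod_i\|f_i\|_{p_i} < \infty$, and all the algebraic manipulations are performed with finite quantities. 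Since the paper explicitly declares the proof standard and omits it, I would simply state this reduction in one or two sentences, point to the proof of Theorem \ref{betterhold} for the finiteness justification, and leave the arithmetic to the reader.
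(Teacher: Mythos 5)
Your proposal is correct and is precisely the standard reduction the paper has in mind when it declares the proof of Theorem \ref{betterhold2} ``standard and therefore omitted'': apply Corollary \ref{cor1} with $\alpha_i = p_i^{-1}$, $\beta_i = 1/n$, and $x_i = f_i^{p_i}/\|f_i\|_{p_i}^{p_i}$, integrate, and rearrange, exactly mirroring the derivation of Theorem \ref{betterhold} from Corollary \ref{betteryoungl}. The substitutions, the integrated inequality, and the finiteness justification via the generalized H\"older inequality are all correct.
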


\section{Probabilistic considerations} 
By comparability of AM-GM differences,
known results about the typical behavior of
the AM-GM inequality in the equal weights case, can be used to give bounds
for other sequences of weights on the same
probability spaces. Given  $n\ge 2$  and $x = (x_1,\dots,x_n) \in \mathbb{R}^n\setminus\{0\}$, 
the
equal weights GM-AM ratio is
\begin{equation}\label{ratio0} 
r_n(x):= 
\frac{\prod_{i=1}^n |x_i|^{1/n}}{ n^{-1}\sum_{i=1}^n |x_i|}.
\end{equation}
By the AM-GM inequality we know that
$0\le r_n(x) \le 1$ always. If
 each $x_i$ is chosen independently from
$[0, \infty)$ 
according to a fixed exponential distribution, i.e.,
with probability density function $f_\lambda (t) =
\lambda e^{-\lambda t}$, then by \cite[Theorem 5.1]{Gl},   with
probability 1
\begin{equation}\label{ratio}
\lim_{n\to\infty} r_n(x) = e^{-\gamma},
\end{equation}
where $\gamma$ is Euler's constant and $e^{-\gamma}\approx 0.5615$. 
Observe that the limit does not in any way  depend on
the parameter $\lambda$. The following 
related result appears in \cite[Corollary 2.2]{A3}:
Denoting by $\mathbb{S}_{1}^{n-1}$ the $\ell_1^n$
unit sphere in $\mathbb{R}^d$, i.e.,
$\mathbb{S}_{1}^{n-1} = \{x\in \mathbb{R}^d
: |x_1| + \cdots + |x_n| = 1\}$, we have

\begin{theorem} \label{conc} 
Let  $k, \varepsilon > 0$, and let 
$P_1^{n-1}$ be the uniform
probability on $\mathbb{S}_{1}^{n-1}$. Then there exists an 
$N = 
N(k,\varepsilon)$ such that for every $n\ge N$, 
\begin{equation}\label{concentration1}
P_{1}^{n-1}\left\{(1 - \varepsilon)  e^{-\gamma}
 < r_n(x)
 < (1 + \varepsilon)  e^{-\gamma} \right\}\ge 
1 - \frac{1}{n^k}.
\end{equation}
\end{theorem}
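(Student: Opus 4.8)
The plan is to realize the uniform probability $P_1^{n-1}$ by means of independent exponential random variables and then to extract the concentration statement from Chernoff-type large-deviation bounds. Let $E_1,\dots,E_n$ be independent, each with density $e^{-t}$ on $[0,\infty)$, put $S_n=\sum_{i=1}^n E_i$, and recall the classical fact that $(E_1/S_n,\dots,E_n/S_n)$ is uniformly distributed on the standard simplex $\{y\ge 0:\sum_{i} y_i=1\}$. Since the uniform surface measure on $\mathbb{S}_1^{n-1}$ is invariant under coordinate sign changes and splits into $2^n$ equal pieces, one per facet, the image of $P_1^{n-1}$ under the map $x\mapsto(|x_1|,\dots,|x_n|)$ is exactly this uniform distribution on the standard simplex. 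As $r_n$ depends only on the absolute values of the coordinates and is invariant under multiplication by positive scalars, it follows that $r_n(x)$ under $P_1^{n-1}$ has the same law as
\[ r_n(E)=\frac{\prod_{i=1}^n E_i^{1/n}}{n^{-1}\sum_{i=1}^n E_i}=\frac{e^{L_n}}{M_n},\qquad L_n:=\frac1n\sum_{i=1}^n\log E_i,\quad M_n:=\frac1n\sum_{i=1}^n E_i. \]
(The set on which some coordinate of $x$ vanishes is $P_1^{n-1}$-null, so $r_n$ is well defined almost surely.)

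Since $\mathbb{E}[\log E_1]=\Gamma'(1)=-\gamma$ and $\mathbb{E}[E_1]=1$, the strong law of large numbers gives $L_n\to-\gamma$ and $M_n\to 1$ almost surely, hence $r_n(E)\to e^{-\gamma}$ almost surely; this already recovers \cite[Theorem 5.1]{Gl} in the present setting, but provides no rate, and a crude Chebyshev estimate only yields an error $O(1/n)$, which is too weak for an arbitrary exponent $k$. To obtain the stated bound, fix $\varepsilon>0$ and, using continuity of $(a,b)\mapsto e^{a}/b$ at $(-\gamma,1)$, choose $\delta\in(0,1/2)$ small enough that $|a+\gamma|<\delta$ and $|b-1|<\delta$ imply $(1-\varepsilon)e^{-\gamma}<e^{a}/b<(1+\varepsilon)e^{-\gamma}$. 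It then suffices to estimate $P(|L_n+\gamma|\ge\delta)$ and $P(|M_n-1|\ge\delta)$.

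The second probability decays exponentially by the usual Chernoff bound for a normalized sum of independent exponentials: $P(|M_n-1|\ge\delta)\le 2e^{-c_1(\delta)n}$ with $c_1(\delta)>0$. For the first, the key point is that $\log E_1$ has a moment generating function finite near the origin, since $\mathbb{E}[e^{s\log E_1}]=\mathbb{E}[E_1^{\,s}]=\Gamma(s+1)<\infty$ for every $s>-1$, and $s\mapsto\log\Gamma(s+1)$ is smooth and strictly convex on $(-1,\infty)$, with value $0$ and derivative $-\gamma$ at $s=0$; the Cram\'er/Chernoff bound then gives $P(|L_n+\gamma|\ge\delta)\le 2e^{-c_2(\delta)n}$ with $c_2(\delta)>0$. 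Combining the two estimates,
\[ P_1^{n-1}\bigl\{r_n(x)\notin\bigl((1-\varepsilon)e^{-\gamma},(1+\varepsilon)e^{-\gamma}\bigr)\bigr\}\le 4e^{-c(\delta)n},\qquad c(\delta):=\min\{c_1(\delta),c_2(\delta)\}>0, \]
and since $4e^{-c(\delta)n}\le n^{-k}$ for every $n$ beyond a threshold $N=N(k,\varepsilon)$ (exponential decay dominates any fixed power), the theorem follows.

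I expect the main obstacle to be the large-deviation control of $L_n$: one must verify carefully that $\log E_1$ has the required finite exponential moments --- this is where the Gamma function intervenes, and where the two tails are asymmetric, $\mathbb{E}[e^{s\log E_1}]$ being finite for all $s>0$ but only for $s>-1$ on the negative side --- and then check that the associated Legendre transform is strictly positive at $-\gamma\pm\delta$, so that indeed $c_2(\delta)>0$. A secondary, more routine point is making the reduction from the $\ell_1^n$ sphere to independent exponentials fully rigorous, including the harmless null set on which some coordinate of $x$ vanishes.
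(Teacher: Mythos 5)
Your proposal is correct, but note that the paper does not actually prove Theorem \ref{conc}: it is quoted verbatim from \cite[Corollary 2.2]{A3}, so there is no in-paper proof to compare against. What you have written is a legitimate self-contained proof. The two ingredients both check out. First, the reduction from the uniform probability on $\mathbb{S}_1^{n-1}$ to i.i.d.\ standard exponentials is sound: the $2^n$ facets of the $\ell_1$ sphere map isometrically onto the standard simplex under $x\mapsto(|x_1|,\dots,|x_n|)$, the normalized Hausdorff measure on a flat simplex is the Dirichlet$(1,\dots,1)$ law, which is the law of $(E_1/S_n,\dots,E_n/S_n)$, and $r_n$ is $0$-homogeneous and depends only on the $|x_i|$; this is exactly the equivalence the paper itself establishes later, by a different (coarea-formula) route, in Proposition \ref{equivofprob}. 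Second, the Cram\'er--Chernoff step is where the real content lies, and your treatment is right: $\mathbb{E}[e^{s\log E_1}]=\Gamma(s+1)$ is finite on $(-1,\infty)$, hence in a two-sided neighborhood of $0$, the mean is $\psi(1)=-\gamma$, and since the cumulant generating function $s\mapsto\log\Gamma(s+1)$ has derivative $-\gamma$ at $0$, the Chernoff exponent $\inf_s\{-s(-\gamma\pm\delta)+\log\Gamma(s+1)\}$ is strictly negative for each $\delta>0$ (take $s$ of the appropriate small sign), giving $c_2(\delta)>0$; the bound for $M_n$ is the textbook Gamma/exponential Chernoff estimate. Exponential decay then beats $n^{-k}$ for any fixed $k$, with a threshold $N(k,\varepsilon)$ depending on $\varepsilon$ through $\delta$ and $c(\delta)$, as required. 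You are also right that Chebyshev alone would not suffice here. The only thing I would add is the explicit remark that the citation \cite[Corollary 2.2]{A3} is the paper's ``proof,'' so your argument is an independent verification rather than a variant of an argument given in this paper.
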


Next we explain why the notions of random choice
in the above results are 
equivalent (as mentioned in \cite[Remark 2.4]{A3}). Observe that $r_n(x)$ is homogeneous of
degree zero, that is, constant on the
rays $t x$, where $t > 0$, $x\ne 0$. In particular,
taking $t^{-1} = \sum_{i=1}^n |x_i|$, we may
assume that $\sum_{i=1}^n |x_i| = 1$, or equivalently, that $x \in \mathbb{S}_{1}^{n-1}$. It is
then natural to define random choice by taking normalized
area on $\mathbb{S}_{1}^{n-1}$ as our probability
measure. 
Suppose next that we select points from the whole space $\mathbb{R}^n$, according to
an exponential
density $2^{-n} \lambda^{n} e^{-\lambda \|x\|_1}$ on $\mathbb{R}^{n}$ for some fixed $\lambda > 0$, or 
equivalently,  $\lambda^{n} e^{-\lambda \|x\|_1}$ on the positive cone $[0,\infty)^{n}$. Of course, an exponential distribution
gives a larger probability to ``small" vectors
than to large vectors, but this has no effect on
the result by zero homogeneity, and for the
same reason, it does not make
any difference  which $\lambda > 0$ we select. 
While all of this is intuitively obvious, for
completeness we present the formal argument. 

\begin{proposition} \label{equivofprob} 
Let 
$P_1^{n-1}$ be the uniform
probability on $\mathbb{S}_{1}^{n-1}$, and set 
$d P_n := 2^{-n} \lambda^{n} e^{-\lambda \|x\|_1} dx$, where $\lambda > 0$.
 Then for every $u\in \mathbb{R}$, 
 $P_{n}\left(\{r_n > u\}\right) = P_1^{n-1}\left(\{r_n > u\}\right)$.
\end{proposition}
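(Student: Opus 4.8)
The plan is to compute both probabilities by integrating in polar-type coordinates adapted to the $\ell_1^n$ structure, exploiting the zero-homogeneity of $r_n$. First I would note that $\{r_n > u\}$ is a cone: since $r_n(tx) = r_n(x)$ for all $t>0$, the set $E := \{x \neq 0 : r_n(x) > u\}$ satisfies $tE = E$ for every $t > 0$. Hence $E$ is completely determined by its trace $E \cap \mathbb{S}_1^{n-1}$, and intuitively $P_n(E)$ should only see that trace. To make this precise, introduce the change of variables $x = r\omega$ where $r = \|x\|_1 \in (0,\infty)$ and $\omega = x/\|x\|_1 \in \mathbb{S}_1^{n-1}$. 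Restricting to the positive cone $[0,\infty)^n$ (where the density is $\lambda^n e^{-\lambda \|x\|_1}$), one has the disintegration formula $dx = c_n\, r^{n-1}\, dr\, d\sigma(\omega)$ for a suitable constant $c_n$ and the surface measure $\sigma$ on $\mathbb{S}_1^{n-1} \cap [0,\infty)^n$, with $P_1^{n-1}$ equal to $\sigma$ normalized to total mass $1$.

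Then I would write
\begin{equation*}
P_n(E) = \int_{\mathbb{S}_1^{n-1}\cap[0,\infty)^n} \mathbf{1}_E(\omega)\left(\int_0^\infty \lambda^n e^{-\lambda r}\, c_n\, r^{n-1}\, dr\right) d\sigma(\omega),
\end{equation*}
using that $\mathbf{1}_E(r\omega) = \mathbf{1}_E(\omega)$ by zero-homogeneity, so the radial integral factors out. The inner integral is independent of $\omega$ (and, after the substitution $t = \lambda r$, independent of $\lambda$ as well, equalling $c_n\,\Gamma(n)$). Taking $E = \mathbb{R}^n$ gives $1 = c_n\,\Gamma(n)\,\sigma(\mathbb{S}_1^{n-1}\cap[0,\infty)^n)$, so dividing one expression by the other yields $P_n(E) = \sigma(E\cap \mathbb{S}_1^{n-1})/\sigma(\mathbb{S}_1^{n-1}\cap[0,\infty)^n) = P_1^{n-1}(E\cap\mathbb{S}_1^{n-1})$, which is exactly the claim. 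The extension from the positive cone to all of $\mathbb{R}^n$ is automatic because $r_n$ depends only on $|x_1|,\dots,|x_n|$, so both $P_n$ and $P_1^{n-1}$ split as averages over the $2^n$ orthants, each contributing identically; the factor $2^{-n}$ in $dP_n$ is precisely what normalizes this.

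The main obstacle is setting up the coordinate change cleanly: $\mathbb{S}_1^{n-1}$ is not smooth (it is the boundary of a cross-polytope), so one must either work orthant-by-orthant where each face is a genuine affine simplex with a well-defined surface measure, or invoke a general polar-coordinate disintegration for norms. I would take the first route: on each open orthant the map $(r,\omega)\mapsto r\omega$ is a diffeomorphism onto that orthant minus the origin, the Jacobian is $r^{n-1}$ times a constant depending only on the orthant's face, and summing over orthants gives the result. Care is also needed that $r_n$ is well-defined and the sets in question are measurable, but this is routine once the homogeneity is in hand.
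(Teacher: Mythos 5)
Your proposal is correct and takes essentially the same route as the paper: a polar decomposition of Lebesgue measure adapted to the $\ell_1$ norm (which the paper phrases via the coarea formula for $f(x)=\|x\|_1$, with $|Jf|=\sqrt{n}$ a.e.), followed by factoring out the radial integral using the zero-homogeneity of $r_n$ and matching normalizations. The only cosmetic differences are that you build the polar coordinates orthant-by-orthant instead of citing the coarea formula, and you fix the constant by evaluating the identity on the whole space rather than computing $|\mathbb{S}_{1}^{n-1}| = 2^n\sqrt{n}/\Gamma(n)$ explicitly; both are harmless variants of the paper's argument.
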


Let us recall the coarea formula 
(for additional information cf. \cite{Fe}, pp. 248-250, or \cite{EG}, pp. 117-119):
\begin{equation}\label{coarea}
\int_{\mathbb{R}^{n}} g(x) |Jf(x)| dx =
\int_{\mathbb{R}} \int_{\{f^{-1}(t)\}} g (x)  d{\mathcal{H}^{n-1}(x)} d t.
\end{equation}
Here $f$ is assumed to be Lipschitz, $\mathcal{H}^{n-1}$ is the $n - 1$ dimensional
Hausdorff measure, and 
$|Jf(x)| :=\sqrt{\operatorname{det} df(x) df(x)^t}$ denotes the
modulus of the Jacobian. 

\begin{proof} It is well known and easy to check that
the volume of the  $(\mathbb{R}^n, \|\cdot\|_1)$-unit ball is
$|\mathbb{B}_1^{n}| = 2^{n}/n!$. The area 
$|\mathbb{S}_{1}^{n-1}| = 
\mathcal{H}^{n-1}(\mathbb{S}_{1}^{n-1})$
of
the $\ell_1$ unit sphere then follows from
the coarea formula: 
For every
$x\in \mathbb{R}^{n}\setminus \cup_{i = 1}^n \{x_i\ne 0\}$,
the function
$f(x)= \|x\|_{1}$ is differentiable, and
$$
|Jf(x)|=\sqrt{\operatorname{det} (df(x) df(x)^t)}= \sqrt{\sum_{i=1}^n 1} = \sqrt{n}
$$ a.e. on
$\mathbb{R}^{n}$.  
Set
$g(x) = \chi_{\mathbb{B}_1^{n}}/|Jf(x)|$ in (\ref{coarea}), and denote
by $\mathbb{S}^{n-1}_{1}(\rho)$ the sphere centered at 0 of
radius $\rho$ (when $\rho = 1$ we  omit it). Then 
\begin{equation*}
\frac{ 2^{n}}{n!} 
= 
|\mathbb{B}^{n}_{1}| 
= \int_{\mathbb{B}^{n}_{1}}   d x
=
\int_{0}^{1} \int_{\mathbb{S}^{n-1}_{1}(\rho)} \frac{1}{\sqrt{n}} d \mathcal{H}^{n-1}(x) d \rho 
=
\frac{|\mathbb{S}^{n-1}_{1}|}{\sqrt{n}}\int_{0}^{1} \rho^{n-1} d \rho
 =   
 \frac{|\mathbb{S}_{1}^{n-1}|}{ n \sqrt{n}},
\end{equation*}
so $|\mathbb{S}_{1}^{n-1}| = 2^n \sqrt{n} /\Gamma(n)$. Observe that 
$ P_1^{n-1}\left(A\right)
 = 
\mathcal{H}^{n-1}(A\cap \mathbb{S}_{1}^{n-1})/|\mathbb{S}_{1}^{n-1}|$.

Recalling that the ratio $r_n$ is homogeneous of
degree 0,
so $r_n(x) = r_n(x/\|x\|_1)$, we
next  set
$g(x) = \chi_{\{r_n > u\}}(x)  \exp\left(  - \lambda \sum_{i=1}^n  |x_i| \right) /|Jf(x)|$ in (\ref{coarea}). Since $|Jf(x)| = \sqrt n$ a.e.,
\begin{equation*}
P_{n}\left(\{r_n > u\}\right)
=
\frac{\lambda^n}{2^n} \int_{\mathbb{R}^n} 
\chi_{\{r_n > u\}}(x)  \exp\left(  - \lambda \sum_{i=1}^n  |x_i| \right)  dx
\end{equation*}
\begin{equation*}
=
 \frac{\lambda^n}{2^n\sqrt{n}} \int_0^\infty  \int_{\{\|x\|_{1} = t\}} \chi_{\{r_n > u\}}(x) 
 e^{- \lambda t} d\mathcal{H}^{n-1}(x) dt
\end{equation*}
\begin{equation*}
=  \frac{\lambda^n}{2^n\sqrt{n}} \int_0^\infty e^{- \lambda t} t^{n-1} \int_{\{\|x\|_{1} = 1\}} \chi_{\{r_n > u\}}(x) 
 d\mathcal{H}^{n-1}(x) dt
\end{equation*}
\begin{equation*}
 =  \frac{\Gamma(n)}{2^n\sqrt{n}} 
 \int_{\mathbb{S}_{1}^{n-1}} \chi_{\{r_n > u\}}(x) 
d\mathcal{H}^{n-1}(x)   
 = P_1^{n-1}\left(\{r_n > u\}\right).
 \end{equation*}
\end{proof}

The next result is stated in terms of independent
choices from an exponential distribution on
$[0,\infty)$ rather than on $\mathbb R$ (so we can write $x_i$ instead of
$|x_i|$, and thus $\|x\|_1 = \sum_{i=1}^n x_i$). As usual, the weights $\alpha_{i,n} > 0$ 
are assumed to satisfy 
$\sum_{i=1}^n \alpha_{i,n}= 1$, and the largest and smallest such weights are denoted by
$\alpha_{\max,n}$ and $\alpha_{\min,n}$ respectively. We suppose that for each $n\ge 2$ we are
given a sequence of weights $\{\alpha_{1,n}, \dots, \alpha_{n,n}\}$.

\begin{theorem} \label{concbounds} 
Let  $k, \lambda, \varepsilon > 0$, and for 
$1 \le i \le n$, let $x_i\in [0,\infty)$ be chosen
independently
according to an exponential distribution with
parameter $\lambda$. Let $P_n$ denote the
product probability on $[0,\infty)^n$ with
density $\lambda^{n} e^{-\lambda \|x\|_1}$.
 Then there exists an 
$N = 
N(k,\varepsilon)$ such that for every $n\ge N$, 
\begin{equation}\label{concentration11}
P_{n}\left\{\left[1 - (1 + \varepsilon)  e^{-\gamma}\right]
\alpha_{\min,n} \|x\|_1 
 < \sum_{i=1}^n \alpha_{i,n} x_i - \prod_{i=1}^n x_i^{\alpha_{i,n}}
 < \left[1 - (1 - \varepsilon)  e^{-\gamma}\right]
\alpha_{\max,n} \|x\|_1  \right\}
\end{equation}
\begin{equation}\label{concentration2}
\ge 
1 - \frac{1}{n^k}.
\end{equation}
\end{theorem}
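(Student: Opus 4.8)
The plan is to reduce the two-sided estimate to a single statement about the equal-weights ratio $r_n$, and then invoke the known concentration of $r_n$, transported from the $\ell_1^n$-sphere to the exponential law by Proposition \ref{equivofprob}.

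First I would rewrite Corollary \ref{cor1} in terms of $r_n$. Setting $\beta_i = 1/n$ in (\ref{refAMGMFeqwe}) and using the elementary identity
\[
n\left(\frac1n\sum_{i=1}^n x_i - \prod_{i=1}^n x_i^{1/n}\right) = \|x\|_1 - n\prod_{i=1}^n x_i^{1/n} = \|x\|_1\bigl(1 - r_n(x)\bigr),
\]
valid whenever $\|x\|_1 > 0$, one obtains
\[
\alpha_{\min,n}\,\|x\|_1\bigl(1 - r_n(x)\bigr)\ \le\ \sum_{i=1}^n \alpha_{i,n} x_i - \prod_{i=1}^n x_i^{\alpha_{i,n}}\ \le\ \alpha_{\max,n}\,\|x\|_1\bigl(1 - r_n(x)\bigr).
\]
Since each $x_i$ has a continuous distribution, $\|x\|_1 > 0$ holds $P_n$-almost surely, so this chain is available inside any $P_n$-probability computation.

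Next I would transfer Theorem \ref{conc} to the measure $P_n$. The ratio $r_n$ is homogeneous of degree zero, so Proposition \ref{equivofprob} gives $P_n(\{r_n > u\}) = P_1^{n-1}(\{r_n > u\})$ for every $u \in \mathbb{R}$; hence $r_n$ has the same distribution under $P_n$ as under the uniform probability on $\mathbb{S}_1^{n-1}$, and (\ref{concentration1}) holds verbatim with $P_n$ in place of $P_1^{n-1}$, for the very same threshold $N = N(k,\varepsilon)$. Then, for $n \ge N$, on the event $E_n := \{(1-\varepsilon)e^{-\gamma} < r_n(x) < (1+\varepsilon)e^{-\gamma}\}$, which satisfies $P_n(E_n) \ge 1 - n^{-k}$, we have
\[
1 - (1+\varepsilon)e^{-\gamma}\ <\ 1 - r_n(x)\ <\ 1 - (1-\varepsilon)e^{-\gamma}.
\]
Multiplying the left inequality by $\alpha_{\min,n}\|x\|_1 > 0$ and the right one by $\alpha_{\max,n}\|x\|_1 > 0$ and combining with the displayed sandwich shows that $E_n$ (up to a $P_n$-null set) is contained in the event inside the braces in (\ref{concentration11}); hence that event too has $P_n$-probability at least $1 - n^{-k}$, which is the assertion.

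There is no genuine obstacle here; the points to watch are purely bookkeeping. The quantities $1 - (1\pm\varepsilon)e^{-\gamma}$ need not be positive when $\varepsilon$ is large, but the chain of inequalities is insensitive to this, since the multiplications preserve strict inequalities. One should also note that the exceptional sets $\{\|x\|_1 = 0\}$ and $\{x_i = 0 \text{ for some } i\}$ are $P_n$-null, so the almost-sure sandwich from Corollary \ref{cor1} may be used freely inside the probability. The only ingredient that is not completely immediate is the distributional identity of $r_n$ under $P_n$ and $P_1^{n-1}$, and that is precisely the content of Proposition \ref{equivofprob}.
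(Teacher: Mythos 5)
Your argument is correct and is essentially the paper's own proof, only written out in more detail: both reduce the two-sided bound to the sandwich of Corollary \ref{cor1} with $\beta_i = 1/n$, rewrite the equal-weights difference in terms of $r_n(x)$ and $\|x\|_1$, and then apply Theorem \ref{conc} transferred to the exponential law via Proposition \ref{equivofprob}. Your explicit handling of the null sets and of the sign of $1-(1\pm\varepsilon)e^{-\gamma}$ is a harmless elaboration of steps the paper leaves implicit.
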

 
\begin{proof} Since
\begin{equation*}
\left\{x\in [0,\infty)^n : (1 - \varepsilon)  e^{-\gamma}
 < r_n (x) 
 < (1 + \varepsilon)  e^{-\gamma}\right\}
\end{equation*}
\begin{equation*}
=
\left\{x\in [0,\infty)^n : \left[1 - (1 + \varepsilon)  e^{-\gamma}\right]
\frac{1}{n} \|x\|_1 
 < \frac{1}{n} \sum_{i=1}^n x_i - \prod_{i=1}^n x_i^{1/n}
 < \left[1 - (1 - \varepsilon)  e^{-\gamma}\right]
\frac{1}{n} \|x\|_1  \right\},
\end{equation*}
the result follows from Corollary
\ref{cor1} together with Theorem \ref{conc} (expressed
in terms of an exponential distribution rather
than normalized surface area on $\mathbb{S}_{1}^{n-1}$).
\end{proof}

A result analogous to the previous one can be stated for
the GM-AM ratio, using the following bounds due to
S. S. Dragomir, cf. \cite[Section 4]{Dra}:
\begin{equation}\label{ratioDra}
r_n(x)^{n \alpha_{\max,n}} \le 
\frac{\prod_{i=1}^n x_{i}^{\alpha_{i,n}}}{\sum_{i=1}^n \alpha_{i,n} x_{i}}
\le 
r_n(x)^{n \alpha_{\min,n}}.
\end{equation}

\begin{theorem} \label{concboundsratio} 
Let  $k, \lambda, \varepsilon > 0$, and for 
$1 \le i \le n$, let $x_i\in [0,\infty)$ be chosen
independently
according to an exponential distribution with
parameter $\lambda$. Let $P_n$ denote the
product probability on $[0,\infty)^n$ with
density $\lambda^{n} e^{-\lambda \|x\|_1}$.
Then there exists an 
$N = 
N(k,\varepsilon)$ such that for every $n\ge N$, 
\begin{equation}\label{concentrationratio}
P_{n}\left\{\left(1 -  \varepsilon\right) e^{- n \alpha_{\max,n} \gamma}
 < \frac{\prod_{i=1}^n x_{i,n}^{\alpha_i}}{\sum_{i=1}^n \alpha_{i,n} x_i}
 < \left(1 + \varepsilon\right)  e^{-n \alpha_{\min,n} \gamma}\right\}
\end{equation}
\begin{equation}\label{concentrationratio2}
\ge 
1 - \frac{1}{n^k}.
\end{equation}
\end{theorem}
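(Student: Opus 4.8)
The plan is to combine Dragomir's two-sided bounds (\ref{ratioDra}) with the concentration estimate for the GM--AM ratio, Theorem \ref{conc}, transferred from the uniform measure on $\mathbb S_1^{n-1}$ to the product exponential measure $P_n$ via Proposition \ref{equivofprob} (exactly as in the proof of Theorem \ref{concbounds}). Fix $k,\lambda,\varepsilon>0$, write $m_n:=n\alpha_{\max,n}\ge 1$ and, for $\eta\in(0,1)$, set $G_n(\eta):=\{x:(1-\eta)e^{-\gamma}<r_n(x)<(1+\eta)e^{-\gamma}\}$; by Theorem \ref{conc}, $P_n(G_n(\eta))\ge 1-\tfrac12 n^{-k}$ once $n\ge N_0(k,\eta)$.

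The right-hand inequality needs nothing beyond the choice $\eta=\varepsilon$. On $G_n(\varepsilon)$, since $0<n\alpha_{\min,n}\le1$ and $t\mapsto t^{\,n\alpha_{\min,n}}$ is increasing, the upper bound in (\ref{ratioDra}) gives
\[
\frac{\prod_{i=1}^n x_i^{\alpha_{i,n}}}{\sum_{i=1}^n\alpha_{i,n}x_i}\le r_n(x)^{\,n\alpha_{\min,n}}<\bigl((1+\varepsilon)e^{-\gamma}\bigr)^{n\alpha_{\min,n}}=(1+\varepsilon)^{n\alpha_{\min,n}}e^{-n\alpha_{\min,n}\gamma}\le(1+\varepsilon)e^{-n\alpha_{\min,n}\gamma},
\]
using $(1+\varepsilon)^{a}\le 1+\varepsilon$ for $a\in(0,1]$.

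The left-hand inequality is where the work lies, and where the slogan ``analogous to Theorem \ref{concbounds}'' is misleading: the same manipulation only yields, on $G_n(\eta)$, the bound $r_n^{\,m_n}>(1-\eta)^{m_n}e^{-m_n\gamma}$, and since $m_n$ may be of order $n$, no fixed $\eta$ achieves $(1-\eta)^{m_n}\ge 1-\varepsilon$. I would split according to the size of $m_n$ (legitimate, since for each $n$ the weights are prescribed). When $m_n$ is \emph{small}, apply the concentration with the shrinking parameter $\eta=\eta_n:=1-(1-\varepsilon)^{1/m_n}\asymp m_n^{-1}$, so that $(1-\eta_n)^{m_n}=1-\varepsilon$ exactly and, on $G_n(\eta_n)$, $\prod x_i^{\alpha_{i,n}}/\sum\alpha_{i,n}x_i\ge r_n^{\,m_n}>(1-\varepsilon)e^{-m_n\gamma}$ by (\ref{ratioDra}); this is valid provided $P_n(G_n(\eta_n))\ge 1-\tfrac12 n^{-k}$, which holds as long as $m_n=O(\sqrt{n/\log n})$ --- here one must use the Chernoff/large--deviation bounds for $\tfrac1n\sum\log x_i$ and $\tfrac1n\sum x_i$ underlying Theorem \ref{conc}, not its fixed-$\varepsilon$ black-box form. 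When $m_n$ is \emph{large}, $e^{-m_n\gamma}$ drops below every fixed negative power of $n$, and the crude bound $\prod x_i^{\alpha_{i,n}}/\sum\alpha_{i,n}x_i\ge(\min_i x_i)/(\max_i x_i)$ together with $P_n\{\min_i x_i<c\,n^{-k-2}\}\le\tfrac14 n^{-k}$ and $P_n\{\max_i x_i>C\log n\}\le\tfrac14 n^{-k}$ suffices: off these bad sets the ratio exceeds $n^{-k-3}\ge e^{-m_n\gamma}>(1-\varepsilon)e^{-m_n\gamma}$ as soon as $m_n\ge\tfrac{k+3}{\gamma}\log n$. Since $\tfrac{k+3}{\gamma}\log n=o(\sqrt{n/\log n})$, the two ranges overlap, so for $n\ge N(k,\varepsilon)$ every possible value $m_n\in[1,n]$ falls in at least one branch.

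For such $n$, intersecting the event supplied by the applicable branch for the left inequality with $G_n(\varepsilon)$ for the right inequality and taking a union bound over the at most three excluded sets yields probability at least $1-n^{-k}$, which is (\ref{concentrationratio})--(\ref{concentrationratio2}). The main obstacle is thus concentrated entirely in the left inequality for large $m_n$: one cannot simply feed Theorem \ref{conc} a fixed $\varepsilon$, and must either extract a one-sided deviation estimate for $r_n$ valid at scale $\asymp m_n^{-1}$, or, past the corresponding threshold, replace the ``$r_n\approx e^{-\gamma}$'' input by the elementary two-sided control of $\min_i x_i$ and $\max_i x_i$ above.
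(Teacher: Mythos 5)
Your diagnosis is correct, and your route is genuinely different from the paper's. The paper's proof of Theorem \ref{concboundsratio} is precisely the two-line black-box combination you warn against: on the event $\{(1-\varepsilon)e^{-\gamma}<r_n(x)<(1+\varepsilon)e^{-\gamma}\}$ it reads off the two sides of (\ref{ratioDra}). For the right-hand inequality this works, exactly as you note, because $n\alpha_{\min,n}\le 1$; but for the left-hand inequality it only yields the lower bound $(1-\varepsilon)^{n\alpha_{\max,n}}e^{-n\alpha_{\max,n}\gamma}$, whose leading constant degenerates to $0$ whenever $n\alpha_{\max,n}\to\infty$ (e.g.\ $\alpha_{\max,n}\equiv 1/2$), so the constant $1-\varepsilon$ claimed in (\ref{concentrationratio}) is not actually delivered by the argument given in the paper. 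Your two-branch repair --- concentration at the $n$-dependent scale $\eta_n\asymp(n\alpha_{\max,n})^{-1}$ when $n\alpha_{\max,n}=O(\sqrt{n/\log n})$, and the elementary bound $\prod_i x_i^{\alpha_{i,n}}/\sum_i\alpha_{i,n}x_i\ge(\min_i x_i)/(\max_i x_i)$ plus a union bound on $\{\min_i x_i<c\,n^{-k-2}\}$ and $\{\max_i x_i>C\log n\}$ when $n\alpha_{\max,n}\ge\frac{k+3}{\gamma}\log n$ --- is sound, the two regimes overlap for large $n$, and the constants check out. What your approach buys is an actual proof of the theorem as stated; what it costs is that the first branch cannot cite Theorem \ref{conc} as a black box (there $N$ is chosen after $\varepsilon$ is fixed), so you must supply a Bernstein/Chernoff estimate for $\frac1n\sum\log x_i$ and $\frac1n\sum x_i$ valid at accuracy $\eta_n$ down to order $\sqrt{k\log n/n}$; you assert this but do not write it out, and it is the one remaining piece to be supplied (it is standard, since for exponential $x_i$ the variable $\log x_i$ has exponentially decaying tails on both sides).
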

 
\begin{proof} This easily follows from (\ref{ratioDra})
 together with Theorem \ref{conc}, expressed
in terms of an exponential distribution 
instead of normalized surface area on $\mathbb{S}_{1}^{n-1}$.
\end{proof}

Additional probabilistic
results regarding the GM-AM ratio for sequences
of unequal weights can be found in \cite{A3}. The main difference
between these results and Theorems  \ref{concbounds}-\ref{concboundsratio}, 
is that in \cite{A3} the probability distributions are chosen depending
on the sequences of weights, while above,
the same density $\lambda^{n} e^{-\lambda \|x\|_1}$ is used for all sequences of 
$n$ weights $\alpha_{i,n}$.


\begin{thebibliography}{WWW}

\bibitem[A1]{A1} Aldaz, J. M. {\em Self-improvement of the inequality between arithmetic and geometric means.} Journal of Mathematical
Inequalities, 3, 2 (2009) pp 213--216.  arXiv:0807.1788.




\bibitem[A2]{A2} Aldaz, J. M. {\em A refinement of the inequality between
arithmetic and geometric means.} Journal of Mathematical
Inequalities, 2, 4 (2008) pp 473--477.  arXiv:0811.3145.

\bibitem[A3]{A3} Aldaz, J. M. {\em Concentration of the ratio between the geometric and
arithmetic means.} Journal of Theoretical Probability, to appear. DOI
10.1007/s10959-009-0215-9. arXiv:0807.4832.

 \bibitem[A4]{A4} Aldaz, J. M. {\em A stability version of H\"older's inequality.} Journal of Mathematical Analysis and Applications, Volume 343, Issue 2 (2008) 842-852. doi:10.1016/j.jmaa.2008.01.104.  arXiv:0710.2307.
 
\bibitem[Dra]{Dra} Dragomir, Sever S. {\em Bounds for the normalised Jensen functional.}  Bull. Austral. Math. Soc.  74  (2006),  no. 3, 471--478.
 
\bibitem[EG]{EG} Evans, Lawrence C.; Gariepy, Ronald F.
{\em Measure theory and fine properties of functions.}
Studies in Advanced Mathematics. CRC Press, Boca Raton, FL, 1992.

\bibitem[Fe]{Fe} Federer, Herbert {\em Geometric measure theory.}
Die Grundlehren der mathematischen Wissenschaften,
Band 153 Springer-Verlag New York Inc., New York 1969.


\bibitem[Fu]{Fu} Furuichi, Shigeru {\em A refinement of the arithmetic-geometric mean inequality.}
 arXiv:0912.5227.

\bibitem[Gl]{Gl} Glaser, Ronald E. {\em The ratio of the geometric mean to the arithmetic mean for a random sample from a gamma distribution.}  J. Amer. Statist. Assoc.  71  (1976)  480--487.


\bibitem[Fla]{Fla} Mitroi, Flavia Corina {\em  About the precision in Jensen-Steffensen inequality.}
Preprint.

\bibitem[St]{St} Steele, J. Michael {\em The Cauchy-Schwarz master class. An introduction to the art of mathematical inequalities.}
 MAA Problem Books Series. Mathematical Association of America, Washington, DC; Cambridge University Press, Cambridge, 2004.

\end{thebibliography}
\end{document}